\providecommand{\U}[1]{\protect\rule{.1in}{.1in}}
\providecommand{\U}[1]{\protect \rule{.1in}{.1in}}
\newtheorem{theorem}{Theorem}[section]
\newtheorem{corollary}[theorem]{Corollary}
\newtheorem{definition}[theorem]{Definition}
\newtheorem{lemma}[theorem]{Lemma}
\newtheorem{remark}[theorem]{Remark}
\newenvironment{proof}[1][Proof]{\noindent \textbf{#1.} }{\  \rule{0.5em}{0.5em}}
\begin{document}

\title{Some sample path properties of $G$-Brownian motion}

\author{Falei Wang\thanks{School of Mathematics, Shandong University; flwang2011@gmail.com.}
\and  Guoqiang Zheng\thanks{School of Mathematics, Shandong University;  zhengguoqiang.ori@gmail.com. Wang and Zheng's research was
partially supported by NSF (No. 10921101) and by the 111
Project (No. B12023)}}
\date{}

\maketitle

\begin{abstract}
In this paper, we shall study the basic absolute properties  of $G$-Brownian motion, i.e., those properties which hold for q.s. $\omega$.
These include the characterization of the zero set and the local maxima of the $G$-Brownian motion paths.
The key ingredient of our approach is an estimate of solutions to $G$-heat equations.
We also show that the $G$-Brownian path is nowhere  locally H\"{o}lder continuous of order $\gamma$ for any $\gamma> \frac{1}{2}$.
\\
\\
\textbf{Keywords}: $G$-Brownian motion, sample path properties.
\\
\\
\textbf{Mathematics Subject Classification (2000).} 60H30, 60H10.
\end{abstract}

\section{Introduction}

Brownian motion was rigorously constructed to model the random walk of pollen particles in a liquid.
After decades of development, Kac \cite{K} established the connection between the expectations of stochastic functionals and the solutions of partial differential equations (PDEs for short), which is  the famous Feynnman-Kac formula.
Impressed  by this bridging relationship, Peng  introduced the $G$-expectation theory (see  \cite{Peng2004,Peng2005,Peng4})
through the following $G$-heat equation:
\begin{align*}
\partial_{t}u(t,x)-G(\partial^2_{xx}u(t,x))=0.
\end{align*}
Almost immediately, the notion of $G$-Brownian motion and the corresponding stochastic calculus of It\^{o}'s type were also established.
For a detailed account and recent development of this theory we refer the reader to  \cite{G,LQ,L}.

One important feature of $G$-expectation theory is that, the random variables are obliged to be quasi-continuous to be in $\mathbb{L}_{G}^1(\Omega)$ from \cite{DHP}.
So one has to be careful when dealing with issues involving integrability, stopping times etc.
Since the indicator  functions of Borel sets are of priority in   Lebesgue integration theory,
an interesting question is whether the indication  functions  are in $\mathbb{L}_{G}^1(\Omega)$.

In this paper, we shall prove the set $\{\omega: B_t=a\}$ is a polar set for each $t>0$ and $a\in\mathbb{R}$.
The proof is base on an estimate of solutions to $G$-heat equations, which is a useful tool for the study of $G$-expectation.
In particular,  we  obtain that  the indicator function $\mathbf{1}_{B_t\in O}\in \mathbb{L}_G^1(\Omega_t)$, where $O$ is  some ``regular'' Borel set of $\mathbb{R}$.
Moreover, for q.s. $\omega$, the $G$-Brownian motion path is  monotonic in no  time interval and the set of local maximum points  for $G$-Brownian  motion is dense.
In addition, the $G$-Brownian path is  nowhere locally H\"{o}lder continuous of order $\gamma$ for any $\gamma>\frac{1}{2}$ as the classical case.

This paper is organized as follow. In section 2, we recall some necessary notations and results of $G$-expectation theory.  In section 3,  we state our main results and some applications are given.

\section{Preliminaries}
The main purpose of this section is to recall some preliminary
results in $G$-framework which are needed in the sequel.  More
details can be found in  Denis et al \cite{DHP} and Peng \cite{Peng4}.

Let $\Omega=C_{0}(\mathbb{R}^{+})$ be the space of all $\mathbb{R}%
$-valued continuous paths $(\omega_{t})_{t\geq0}$, with $\omega_{0}=0$,
equipped with the distance%
\[
\rho(\omega^{1},\omega^{2}):=\sum_{i=1}^{\infty}2^{-i}[(\max_{t\in \lbrack
0,i]}|\omega_{t}^{1}-\omega_{t}^{2}|)\wedge1].
\]
Denote by $\mathcal{B}({\Omega})$  the Borel $\sigma$-algebra of $\Omega$.
For each $t\in[0,\infty)$, we also introduce the following spaces.
\\
$\bullet$ $\Omega_{t}:=\{\omega({\cdot\wedge t}):\omega\in\Omega\}$, $\mathcal{F}_{t}:=\mathcal {B}(\Omega_{t})$,
\\
$\bullet$ $L^{0}{(\Omega)}:$ the space of all $\mathcal
{B}(\Omega)$-measurable real functions,
\\
$\bullet$ $L^{0}{(\Omega_{t})}:$ the space of all $\mathcal{F}_{t}$-measurable real functions,
\\
$\bullet$
$C_{b}{(\Omega)}:$ all  continuous  bounded elements in $L^{0}{(\Omega)}$, $C_{b}{(\Omega_{t})}:=C_{b}{(\Omega)}\cap L^{0}{(\Omega_{t})}.$

In Peng \cite{Peng4}, a $G$-Brownian motion is constructed on a sublinear expectation
space $(\Omega,\mathbb{L}_{G}^1,\hat{\mathbb{E}},(\hat{\mathbb{E}}_t)_{t\geq 0})$, where   $\mathbb{L}_{G}^p(\Omega)$
 is a Banach space
under the natural norm $\|X\|_p=\hat{\mathbb{E}}[|X|^p]^{1/p}$.   In this space the corresponding
canonical process $B_t(\omega) = \omega_t$
 is a $G$-Brownian motion.
Denis et al.\cite{DHP}  proved that
$L^{0}{(\Omega)}\supset\mathbb{L}_{G}^{p}(\Omega)\supset
C_{b}{(\Omega)}$ and there exists a weakly compact family $\mathcal
{P}$ of probability measures defined on $(\Omega, \mathcal
{B}(\Omega))$ such that
$$\hat{\mathbb{E}}[X]=\sup\limits_{P\in\mathcal{P}}E_{P}[X], \ \ X\in\mathbb{L}_{G}^{1}{(\Omega)}.$$

\begin{remark}\label{5}{\upshape Denis et al. \cite{DHP} gave a concrete set $\mathcal{P}_M$ that represents $\hat{\mathbb{E}}$.
Consider a 1-dimensional Brownian motion $ {W_t}$ on $(\Omega,\mathcal{F},P)$, then
\[
\mathcal{P}_M := \{P_{\theta} : P_{\theta}= P\circ X^{-1},\ X_t = \int^t_0 \theta_sdW_s,\  \theta\in L^2_{\mathcal{F}}([0, T ]; [\underline{\sigma}^2, \overline{\sigma}^2])\}\]
is a set that represents $\hat{\mathbb{E}}$, where $L^2_{\mathcal{F}}([0, T ]; [\underline{\sigma}^2, \overline{\sigma}^2])$ is the collection of all $\mathcal{F}$-adapted
measurable processes with $\underline{\sigma}^2 \leq |\theta_s|^2 \leq\overline{\sigma}^2$.
}
\end{remark}

Now we introduce the Choquet capacity:
$$c(A):=\sup\limits_{P\in\mathcal{P}}P(A), \ \ A\in\mathcal
{B}(\Omega).$$
\begin{definition}A set $A\subset\mathcal{B}(\Omega)$ is polar if $c(A)=0$.  A
property holds $``quasi$-$surely''$ (q.s.) if it holds outside a
polar set.
\end{definition}
\begin{definition}A real function $X$ on $\Omega$ is said to be quasi-continuous if for each $\varepsilon>0$,
there exists an open set $O$ with $c(O)<\varepsilon$ such that
$X|_{O^{c}}$ is continuous.
\end{definition}

\begin{definition}  We say that  $X:\Omega\mapsto\mathbb{R}$ has a quasi-continuous
version if there exists a quasi-continuous function $Y:\Omega\mapsto\mathbb{R}$ such
that $X = Y$, q.s..
\end{definition}

 Then $\mathbb{L}_{G}^{p}(\Omega)$ can be characterized as
follows:
$$\mathbb{L}_{G}^{p}(\Omega)=\{X\in {L}^{0}(\Omega)|\lim\limits_{N\rightarrow\infty}\mathbb{\hat{E}}[|X|^p\mathbf{1}_{|X|\geq N}]=0\ \text{and}\  X\ \text {has a quasi-continuous version}\}.$$

\section{The main results}
In this paper, we always assume $\underline{\sigma}^2>0.$ Without loss of generality, assume $\overline{\sigma}^2=1$.
This section is devoted to study the sample path properties of $G$-Brownian motion, before that we need some estimates  from the theory of fully nonlinear partial differential equations.
\begin{lemma}\label{lemma1}
Suppose $u^n$ is the solution of the following $G$-heat equation,
\begin{align}\label{w1}
\begin{cases}
&
\partial_{t}u^n(t,x)-G(\partial^2_{xx}u^n(t,x))=0, \ (t,x)\in(0,T)\times \mathbb{R},\\
& u^n(0,x)=\exp\{-\frac{n(x-a)^2}{2}\},
\end{cases}
\end{align}
where $n\geq 0$ and $a\in\mathbb{R}$.
Then there exists some constant $\alpha\in(0,\frac{1}{2}]$  depending only on $G$  such that
\[
u^n(t,x)\leq \frac{1}{(nt+1)^\alpha}\exp\{-\frac{ n(x-a)^2}{2(nt+1)}\}.
\]\end{lemma}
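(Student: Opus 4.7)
The plan is to construct an explicit smooth supersolution of the $G$-heat equation which agrees with the initial data, and then invoke the comparison principle for $G$-heat equations to bound $u^n$ from above by it. The ansatz is guided by the classical degenerate case $\underline{\sigma} = \overline{\sigma} = 1$, in which Gaussian-times-Gaussian integration gives exactly $u^n(t,x) = (nt+1)^{-1/2}\exp\{-\frac{n(x-a)^2}{2(nt+1)}\}$; this suggests keeping the Gaussian shape and only weakening the prefactor exponent $\alpha$.

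So I would try $v(t,x) := (nt+1)^{-\alpha} \exp\{-\frac{n(x-a)^2}{2(nt+1)}\}$ with $\alpha\in(0,1/2]$ to be pinned down, and check $v(0,x)=u^n(0,x)$ automatically. Writing $\tau = nt+1$, direct computation gives
\[
\partial_t v = \frac{n}{\tau}\Bigl[-\alpha + \frac{n(x-a)^2}{2\tau}\Bigr] v, \qquad \partial^2_{xx} v = \Bigl[\frac{n^2(x-a)^2}{\tau^2} - \frac{n}{\tau}\Bigr] v,
\]
so $\partial^2_{xx}v$ changes sign at $|x-a|^2 = \tau/n$. On the convex region $|x-a|^2\geq \tau/n$, $G(\partial^2_{xx}v) = \tfrac{1}{2}\partial^2_{xx}v$ (recall $\overline{\sigma}^2=1$) and a short cancellation yields $\partial_t v - G(\partial^2_{xx}v) = \frac{n(1/2-\alpha)}{\tau} v \geq 0$ whenever $\alpha\leq 1/2$. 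On the concave region $|x-a|^2<\tau/n$, $G(\partial^2_{xx}v) = \tfrac{\underline{\sigma}^2}{2}\partial^2_{xx}v$ and one gets
\[
\partial_t v - G(\partial^2_{xx}v) = \frac{n v}{\tau}\Bigl[\tfrac{\underline{\sigma}^2}{2}-\alpha + \tfrac{n(x-a)^2}{2\tau}(1-\underline{\sigma}^2)\Bigr],
\]
which is nonnegative (the quadratic term is $\geq 0$ since $\underline{\sigma}^2\leq 1$) as soon as $\alpha\leq \underline{\sigma}^2/2$. The choice $\alpha := \underline{\sigma}^2/2\in(0,1/2]$ meets both conditions and depends only on $G$.

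Since $v$ is a smooth classical supersolution of the $G$-heat equation with $v(0,\cdot)=u^n(0,\cdot)$, and both $u^n$ and $v$ are bounded and continuous, the standard comparison principle for viscosity solutions of the $G$-heat equation (equivalently, the monotonicity of the nonlinear semigroup $P_t\varphi(x) = \hat{\mathbb{E}}[\varphi(x+B_t)]$) yields $u^n\leq v$ on $[0,T]\times\mathbb{R}$, which is the desired estimate. The only genuine subtlety is the sign change of $\partial^2_{xx}v$ inside the nonlinear operator $G$, forcing the piecewise check above and producing the asymmetric constant $\alpha = \underline{\sigma}^2/2$; once the self-similar Gaussian ansatz is in hand, the remaining work is essentially algebraic.
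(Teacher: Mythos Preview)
Your proof is correct and follows essentially the same strategy as the paper: take the self-similar Gaussian ansatz $v(t,x)=(nt+1)^{-\alpha}\exp\{-\frac{n(x-a)^2}{2(nt+1)}\}$, verify it is a bounded supersolution with the right initial data when $\alpha=\underline{\sigma}^2/2$, and conclude via the comparison theorem. The only cosmetic difference is in how $G(\partial^2_{xx}v)$ is handled: you split into the two regions $\{\partial^2_{xx}v\geq 0\}$ and $\{\partial^2_{xx}v<0\}$ and evaluate $G$ exactly on each, whereas the paper uses the subadditivity $G(a+b)\leq G(a)+G(b)$ once to avoid the case distinction; both computations lead to the same constraint $\alpha\leq\underline{\sigma}^2/2$.
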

\begin{proof}
Set $\displaystyle v^n(t,x)=\frac{1}{(nt+1)^\alpha}\exp\{-\frac{n(x-a)^2}{2(nt+1)}\}$, here $\alpha$  is a positive constant which need to be determined in the following proof.
Then we get
\begin{align*}
&\partial_tv^n(t,x)=-\frac{n\alpha}{nt+1}v(t,x)+\frac{ n^2(x-a)^2}{2(nt+1)^2}v(t,x),\
\partial_xv^n(t,x)=-\frac{ n(x-a)}{nt+1}v(t,x),\\
&\partial_{xx}^2v^n(t,x)=-\frac{n }{nt+1}v(t,x)+\frac{ n^2(x-a)^2}{(nt+1)^2}v(t,x).
\end{align*}
Consequently,
\begin{align*}
&\partial_tv^n(t,x)-G(\partial_{xx}^2v^n(t,x))\\& \ \
=-\frac{n\alpha}{nt+1}v^n(t,x)+\frac{ n^2(x-a)^2}{2(nt+1)^2}v^n(t,x)-G(-\frac{n }{nt+1}v^n(t,x)+\frac{ n^2(x-a)^2}{(nt+1)^2}v^n(t,x))\\
&\ \ \geq -\frac{n\alpha}{nt+1}v^n(t,x)+\frac{ n^2(x-a)^2}{2(nt+1)^2}v^n(t,x)-G(-\frac{n }{nt+1}v^n(t,x))-G(\frac{ n^2(x-a)^2}{(nt+1)^2}v^n(t,x))\\
&\ \ =\frac{n}{2(nt+1)}(\underline{\sigma}^2-2\alpha)v^n(t,x)+\frac{n^2(x-a)^2}{2(nt+1)^2}(1-\overline{\sigma}^2) v^n(t,x)\\
&\ \ \geq 0,
\end{align*}
where $\alpha=\frac{1}{2}\underline{\sigma}^2\leq \frac{1}{2}$.
Thus $v^n(t,x)$ is a bounded supersolution of PDE \eqref{w1} for each $n\geq 0$. From the comparison theorem (see Appendix C in Peng \cite {Peng4}), we derive
\[
u^n(t,x)\leq v^n(t,x),
\]
which completes the proof.
\end{proof}

\begin{remark}
{\upshape
We remark that the above results is non-trivial. Indeed if $\underline{\sigma}^2<\overline{\sigma}^2$, then the constant $ \alpha$ appearing in the above results is strictly less that $\frac{1}{2}$ because of the nonlinearity.
}
\end{remark}

Now we shall  consider some simple Borel functions on $\Omega$.
\begin{lemma} \label{w2}
For each $a\in\mathbb{R}$, $\mathbf{1}_{B_t\leq a}, \mathbf{1}_{B_t<a}\in \mathbb{L}_G^1(\Omega_t)$.
\end{lemma}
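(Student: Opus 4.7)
The plan is to first use Lemma 2.1 combined with the Feynman--Kac representation for $G$-heat equations to show that $\{B_t=a\}$ is polar, and then to realize $\mathbf{1}_{B_t\leq a}$ as the $\mathbb{L}_G^1$-limit of a Cauchy sequence of continuous cylinder functionals.

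\textbf{Step 1 (polarity of $\{B_t=a\}$).} The solution of the $G$-heat equation \eqref{w1} admits the nonlinear Feynman--Kac representation $u^n(t,0)=\hat{\mathbb{E}}[\exp\{-n(B_t-a)^2/2\}]$. Since $\exp\{-n(B_t-a)^2/2\}\geq \mathbf{1}_{\{B_t=a\}}$, Lemma \ref{lemma1} yields
\[
c(\{B_t=a\})\leq \hat{\mathbb{E}}[\exp\{-n(B_t-a)^2/2\}]\leq \frac{1}{(nt+1)^\alpha}\exp\Bigl\{-\frac{na^2}{2(nt+1)}\Bigr\}\xrightarrow[n\to\infty]{}0,
\]
using that $\alpha>0$ because $\underline{\sigma}^2>0$. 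Hence $\{B_t=a\}$ is polar.

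\textbf{Step 2 (continuous sandwich).} Let $f_k^-,f_k^+\colon\mathbb{R}\to[0,1]$ be the continuous piecewise-linear functions with $f_k^-\equiv 1$ on $(-\infty,a-1/k]$, $f_k^-\equiv 0$ on $[a,\infty)$, and $f_k^+\equiv 1$ on $(-\infty,a]$, $f_k^+\equiv 0$ on $[a+1/k,\infty)$, linear in between. Setting $\phi_k^\pm(\omega):=f_k^\pm(\omega_t)$ gives $\phi_k^\pm\in C_b(\Omega_t)\subset \mathbb{L}_G^1(\Omega_t)$ and
\[
\phi_k^-\leq \mathbf{1}_{B_t\leq a}\leq \phi_k^+,\qquad 0\leq \phi_k^+-\phi_k^-\leq \mathbf{1}_{\{|B_t-a|\leq 1/k\}}.
\]

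\textbf{Step 3 (Cauchy property).} On $\{|B_t-a|\leq 1/k\}$ one has $\exp\{-k^2(B_t-a)^2/2\}\geq e^{-1/2}$, so applying Lemma \ref{lemma1} with $n=k^2$,
\[
\hat{\mathbb{E}}[\phi_k^+-\phi_k^-]\leq e^{1/2}\,\hat{\mathbb{E}}[\exp\{-k^2(B_t-a)^2/2\}]\leq e^{1/2}(k^2 t+1)^{-\alpha}\xrightarrow[k\to\infty]{}0.
\]
Since $|\phi_k^+-\phi_j^+|\leq (\phi_k^+-\phi_k^-)+(\phi_j^+-\phi_j^-)$, the sequence $\{\phi_k^+\}$ is Cauchy in the Banach space $\mathbb{L}_G^1(\Omega_t)$; let $X$ denote its limit.

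\textbf{Step 4 (identification of the limit).} Extracting a subsequence that converges q.s.\ to $X$ and noting that $\phi_k^+(\omega)\to \mathbf{1}_{\omega_t\leq a}$ for every $\omega$ with $\omega_t\neq a$, which is the complement of a polar set by Step 1, we conclude $X=\mathbf{1}_{B_t\leq a}$ q.s. Therefore $\mathbf{1}_{B_t\leq a}\in \mathbb{L}_G^1(\Omega_t)$. Finally, $\mathbf{1}_{B_t<a}$ differs from $\mathbf{1}_{B_t\leq a}$ only on the polar set $\{B_t=a\}$, so it lies in the same $\mathbb{L}_G^1$-class.

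The main obstacle is Step 3: calibrating the width $1/k$ of the approximation window against the PDE decay estimate provided by Lemma \ref{lemma1}. The choice $n=k^2$ converts the indicator of the thin strip $\{|B_t-a|\leq 1/k\}$ into a quantity controlled by the Gaussian kernel, exploiting that $\alpha>0$ forces the prefactor $(nt+1)^{-\alpha}$ to vanish. Once this calibration is established, the rest is a routine sandwich argument together with the Banach-space closure of $\mathbb{L}_G^1(\Omega_t)$.
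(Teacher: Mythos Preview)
Your proof is correct and follows essentially the same strategy as the paper's: both reduce membership in $\mathbb{L}_G^1(\Omega_t)$ to the estimate $\hat{\mathbb{E}}[\exp\{-n(B_t-a)^2/2\}]\leq (nt+1)^{-\alpha}$ from Lemma~\ref{lemma1} via the nonlinear Feynman--Kac formula, and both derive polarity of $\{B_t=a\}$ from the same bound. The paper's execution is slightly more direct: it uses the single one-sided approximant $\varphi^n(x)=\mathbf{1}_{x\leq 0}+e^{-nx^2}\mathbf{1}_{x>0}$ and observes $0\leq \varphi^n(B_t)-\mathbf{1}_{B_t\leq 0}\leq e^{-nB_t^2}$, so $\|\varphi^n(B_t)-\mathbf{1}_{B_t\leq 0}\|_1\to 0$ immediately, avoiding your two-sided sandwich, the calibration $n=k^2$, and the Cauchy/subsequence identification in Step~4.
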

\begin{proof}
Without loss of generality, assume $a=0$.
Denote $\varphi^n(x)=\mathbf{1}_{x\leq 0} +\exp(-nx^2)\mathbf{1}_{x>0}$ for each $n\geq 1$. It is obvious that $\varphi^n(B_t)\in\mathbb{L}_G^1(\Omega_t).$
Note that
\[
\mathbb{\hat{E}}[\varphi^n(B_t)-\mathbf{1}_{B_t\leq 0}]\leq \mathbb{\hat{E}}[\exp(-nB_t^2)].
\]

Then consider the following $G$-heat equation,
\begin{align*}
\begin{cases}
&
\partial_{t}u^n(t,x)-G(\partial^2_{xx}u^n(t,x))=0, (t,x)\in(0,T)\times \mathbb{R},\\
& u^n(0,x)=\exp\{-nx^2\}.
\end{cases}
\end{align*}
Applying Lemma \ref{lemma1} and  nonlinear Feynman-Kac formula in Peng \cite{Peng4}, we get,
\[
\mathbb{\hat{E}}[\exp(-nB_t^2)]=u^n(t,0)\leq \frac{1}{n^\alpha t^\alpha},
\]
 where $\alpha$ is given in Lemma \ref{lemma1}.
Thus  sending $n\rightarrow\infty$, we have\[
\lim\limits_{n\rightarrow\infty}\mathbb{\hat{E}}[\varphi^n(B_t)-\mathbf{1}_{B_t\leq 0}]\leq \lim\limits_{n\rightarrow\infty}\frac{1}{n^{\alpha}t^\alpha}=0,
\]
and we deduce $\mathbf{1}_{B_t\leq 0}\in \mathbb{L}_G^1(\Omega_t)$.

By a similar analysis, we also have \[
\mathbb{\hat{E}}[\mathbf{1}_{B_t=0}]\leq \lim\limits_{n\rightarrow\infty}\mathbb{\hat{E}}[\exp(-nB_t^2)]=0. \]
Thus $\mathbf{1}_{B_t\leq 0}= \mathbf{1}_{B_t< 0}, $ q.s. and the proof is complete.
\end{proof}
\begin{remark}{\upshape
If $\underline{\sigma}^2=0$, it follows from Remark \ref{5} that $c(B_t=0)=1$ and $\mathbf{1}_{B_t< 0}$  is not in $\mathbb{L}_G^1(\Omega_t)$.
}
\end{remark}

 Unlike the classical case, we can not get $\mathbf{1}_{H}\in \mathbb{L}_G^1(\Omega_t)$ for any  Borel set $H\in\mathcal{F}_t$. Indeed Soner et al \cite{STZ} constructed a counterexample. However, from Lemma \ref{w2}, we immediately have the following theorem.
\begin{theorem}\label{2}
For the $G$-Brownian motion $B$, we have the following properties.
\begin{description}
\item[(i)] Given a Borel set $O$ of $\mathbb{R}$. If there exists a sequence points $\{a_i\}_{i=1}^{\infty}$  such that
$\partial O\subset \cup_{i=1}^{\infty} a_{i}$, then $\mathbf{1}_{B_t\in O}\in \mathbb{L}_G^1(\Omega_t)$.
\item[(ii)] For each $a\in\mathbb{R}$, the set $Z=\{t: \ B_t=a\}$ is  q.s. closed and has zero Lebesgue measure.
\end{description}
\end{theorem}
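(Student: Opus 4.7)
\bigskip
\noindent\textbf{Proof proposal.}

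For part (i), the plan is to exploit Lemma \ref{w2} together with the fact that $\mathbb{L}_G^1(\Omega_t)$ is a Banach space and contains $C_b(\Omega_t)$, and to show that $\mathbf{1}_{B_t\in O}$ is a norm limit of continuous bounded functionals of $B_t$. First I would observe that Lemma \ref{w2} gives $\mathbf{1}_{B_t\leq a}=\mathbf{1}_{B_t<a}$ q.s.\ for every $a\in\mathbb{R}$, hence $c(\{B_t=a_i\})=0$ for each $i$, and by countable subadditivity $c(\{B_t\in\partial O\})=0$. For $\varepsilon>0$, since $c(\{B_t\in(a_i-\delta,a_i+\delta)\})\downarrow c(\{B_t=a_i\})=0$ as $\delta\downarrow0$ (using that $\mathbf{1}_{(a_i-\delta,a_i+\delta)}(B_t)$ can be squeezed between differences of the indicators in Lemma \ref{w2}), we can pick $\delta_i>0$ with $c(\{B_t\in(a_i-\delta_i,a_i+\delta_i)\})<\varepsilon/2^i$. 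Set $N_\varepsilon=\bigcup_i(a_i-\delta_i,a_i+\delta_i)$, so $\partial O\subset N_\varepsilon$ and $\mathbf{1}_O$ is locally constant on $\mathbb{R}\setminus N_\varepsilon$. By Tietze's extension theorem there is a continuous $g_\varepsilon:\mathbb{R}\to[0,1]$ with $g_\varepsilon=\mathbf{1}_O$ on $\mathbb{R}\setminus N_\varepsilon$.

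Then $|g_\varepsilon(B_t)-\mathbf{1}_{B_t\in O}|\leq \mathbf{1}_{B_t\in N_\varepsilon}$, so
\[
\hat{\mathbb{E}}\bigl[|g_\varepsilon(B_t)-\mathbf{1}_{B_t\in O}|\bigr]\leq \sum_{i=1}^{\infty}c(\{B_t\in(a_i-\delta_i,a_i+\delta_i)\})<\varepsilon.
\]
Since $g_\varepsilon(B_t)\in C_b(\Omega_t)\subset\mathbb{L}_G^1(\Omega_t)$ and $\mathbb{L}_G^1(\Omega_t)$ is complete, sending $\varepsilon\to0$ places $\mathbf{1}_{B_t\in O}$ in $\mathbb{L}_G^1(\Omega_t)$; the tail condition in the characterization of $\mathbb{L}_G^1$ is trivial because the indicator is bounded. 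The main delicate point is verifying that the capacity of a small interval around $a_i$ can be made small, but this follows by writing $\mathbf{1}_{B_t\in(a_i-\delta,a_i+\delta)}=\mathbf{1}_{B_t<a_i+\delta}-\mathbf{1}_{B_t\leq a_i-\delta}$ and applying Lemma \ref{w2} with monotone convergence as $\delta\downarrow 0$.

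For part (ii), closedness of $Z$ is immediate: every $\omega\in\Omega$ has a continuous path, so $Z(\omega)=\{t\geq 0: \omega(t)=a\}$ is the preimage of $\{a\}$ under the continuous map $t\mapsto\omega(t)$. To show the Lebesgue measure $|Z|$ vanishes quasi-surely, I would use the representation $\hat{\mathbb{E}}=\sup_{P\in\mathcal{P}}E_P$ together with a Fubini argument. From the proof of Lemma \ref{w2} we already have $\hat{\mathbb{E}}[\mathbf{1}_{B_t=a}]=0$ for every $t>0$, and in particular $P(B_t=a)=0$ for every $P\in\mathcal{P}$ and every $t>0$. For any $T>0$ and $P\in\mathcal{P}$, Tonelli gives
\[
E_P\!\left[\int_0^T\mathbf{1}_{B_t=a}\,dt\right]=\int_0^T P(B_t=a)\,dt=0,
\]
so $\int_0^T\mathbf{1}_{B_t=a}\,dt=0$ for $P$-a.s.\ $\omega$. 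Taking the supremum over $\mathcal{P}$ and then the union over rational $T$ yields a polar exceptional set outside of which $|Z\cap[0,T]|=0$ for every rational, hence every, $T>0$.

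The main obstacle is really in part (i): checking that the continuous extension $g_\varepsilon$ exists and keeping the capacity estimate summable. Once the one-point estimate $c(\{B_t=a\})=0$ is in hand, the rest is a standard approximation and the completeness of $\mathbb{L}_G^1(\Omega_t)$. Part (ii) is then a routine Fubini application, with the only subtlety being the countable-union argument to promote the exceptional set from ``depending on $T$'' to a single polar set.
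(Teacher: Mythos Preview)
Your argument for part (ii) is essentially the same as the paper's: continuity gives closedness, and a Fubini/Tonelli computation using $c(B_t=a)=0$ for every $t>0$ gives zero Lebesgue measure of $Z$ quasi-surely. The paper writes this as the single inequality $\hat{\mathbb{E}}[\int_0^\infty\mathbf{1}_Z(s)\,ds]\leq\int_0^\infty\hat{\mathbb{E}}[\mathbf{1}_Z(s)]\,ds=0$, which packages your $P$-by-$P$ argument into one line.

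For part (i) your route is genuinely different. The paper argues as follows: $c(\{B_t\in\partial O\})\leq\sum_i c(\{B_t=a_i\})=0$; the set $\{B_t\in\partial O\}$ is closed in $\Omega$; Lemma~3.4 of Song then yields directly that $\mathbf{1}_{B_t\in O}$ is quasi-continuous, and the pathwise characterization of $\mathbb{L}_G^1(\Omega_t)$ finishes. You instead build an explicit approximation: enlarge each $a_i$ to a small interval with small capacity, extend $\mathbf{1}_O$ continuously off the union via Tietze, and conclude by completeness of $\mathbb{L}_G^1$. This is in effect a hands-on proof of the special case of Song's lemma you need, and has the virtue of being self-contained. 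The trade-off is that the paper's proof is two lines once the external reference is granted.

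One point in your argument needs firmer ground: the claim that $c(\{B_t\in(a_i-\delta,a_i+\delta)\})\to 0$ as $\delta\downarrow 0$. Invoking ``monotone convergence'' here is delicate, because downward monotone convergence for $\hat{\mathbb{E}}$ is not automatic outside $C_b$, and the sets in question are not compact in $\Omega$. The clean justification is the quantitative estimate already available from Lemma~\ref{lemma1}: for any $n$,
\[
c(|B_t-a_i|\leq\delta)\leq e^{n\delta^2/2}\,\hat{\mathbb{E}}\bigl[e^{-n(B_t-a_i)^2/2}\bigr]\leq \frac{e^{n\delta^2/2}}{(nt+1)^\alpha},
\]
and taking $n=\delta^{-2}$ gives $c(|B_t-a_i|\leq\delta)\leq e^{1/2}\delta^{2\alpha}t^{-\alpha}\to 0$ (this is exactly the paper's later Lemma~\ref{l}). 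With this in place your construction goes through; without it, the ``squeeze plus monotone convergence'' sketch is not a proof in the $G$-setting.
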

\begin{proof}
(i)
From Lemma \ref{w2}, for each $a\in\mathbb{R}$, $\mathbf{1}_{B_t=a}=0$, q.s..
Then \[c(B_t\in \partial O)\leq \sum\limits_{i=1}^{\infty}c(B_t =a_i)=0.\]
Note that the set $\{\omega: B_t\in \partial O\}$ is a closed subset of $\Omega$.
By Lemma 3.4 of Song \cite{Song}, we deduce that  $\mathbf{1}_{B_t\in O}$ is quasi-continuous.
Recalling the pathwise description of $\mathbb{L}_G^1(\Omega_t)$, we get $\mathbf{1}_{B_t\in O}\in \mathbb{L}_G^1(\Omega_t)$.

(ii)
From the continuity of $G$-Brownian motion paths, we get  $Z$ is closed.
Recalling $c(B_t=a)=0$ for each $t>0$, we obtain
\[
\mathbb{\hat{E}}[\int^{\infty}_0\mathbf{1}_Z(s)ds]\leq \int^{\infty}_0\mathbb{\hat{E}}[\mathbf{1}_Z(s)]ds=0.
\]
It follows that $Z$ has q.s. zero Lebesgue measure, which is the desired result.
\end{proof}

We also have the following result.
\begin{lemma}\label{3}
For each $t>0$, \[
c(B_t^*=0)=0,
\]
where $B^*_t=\sup\limits_{s\in[0,t]}B_s.$
\end{lemma}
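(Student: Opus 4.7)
Since $B_0 = 0$ we have $B_t^* \ge 0$ identically, so
$$\{B_t^* = 0\} \;=\; \{B_s \le 0 \text{ for every } s \in [0,t]\},$$
which is a closed subset of $\Omega$. My plan is to show that this event has probability zero under every element of the representing family $\mathcal{P}_M$ of Remark~\ref{5}; since the event is closed in $\Omega$ and $\mathcal{P}_M$ represents $\hat{\mathbb{E}}$, this will yield $c(B_t^*=0)=0$.

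Fix $P_\theta \in \mathcal{P}_M$. Under $P_\theta$ the canonical process $B$ has the same law as $X_s = \int_0^s \theta_u\,dW_u$, a continuous martingale whose quadratic variation $\langle B\rangle_s = \int_0^s \theta_u^2\,du$ satisfies $\langle B\rangle_t \ge \underline{\sigma}^2 t > 0$ by our standing assumption $\underline{\sigma}^2 > 0$. The Dambis--Dubins--Schwarz theorem, applied on the space carrying $W$, produces a standard Brownian motion $\tilde W$ with $B_s = \tilde W_{\langle B\rangle_s}$. Since $\langle B\rangle$ is continuous, nondecreasing, and $\langle B\rangle_t \ge \underline{\sigma}^2 t$,
$$B_t^* \;=\; \sup_{u\in[0,\langle B\rangle_t]}\tilde W_u \;\ge\; \sup_{u\in[0,\underline{\sigma}^2 t]}\tilde W_u,$$
and the right-hand side is $P_\theta$-a.s.\ strictly positive: by the reflection principle it has the law of $|\tilde W_{\underline{\sigma}^2 t}|$, which puts no mass at $0$. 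Hence $P_\theta(B_t^*=0) = 0$ for every $\theta$.

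The main subtlety will be the passage from the fibrewise vanishing $P(B_t^*=0)=0$ on $\mathcal{P}_M$ to the capacity statement $c(B_t^*=0)=0$, since $\{B_t^*=0\}$ is not known a priori to be an element of $\mathbb{L}_G^1(\Omega)$. This is handled by the fact that the family $\mathcal{P}$ underlying $c$ can be taken to be the weak closure of $\mathcal{P}_M$ in the construction of Denis et al.~\cite{DHP}; coupled with the upper semicontinuity of $P\mapsto P(F)$ on closed $F$ under weak convergence, this transfers the vanishing from $\mathcal{P}_M$ to $\mathcal{P}$ and delivers the claim.
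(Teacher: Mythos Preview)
Your argument is correct and shares the same core with the paper's proof: both use the Dambis--Dubins--Schwarz time change on each $P_\theta\in\mathcal{P}_M$ together with $\langle B\rangle_t\ge\underline{\sigma}^2 t$ to bound $B_t^*$ from below by $\tilde W^*_{\underline{\sigma}^2 t}$, and then invoke the reflection identity $\tilde W_t^*\overset{d}{=}|\tilde W_t|$.

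The only difference lies in how the $P_\theta$-wise vanishing is upgraded to the capacity statement. The paper dominates $\mathbf{1}_{\{B_t^*=0\}}$ by the bounded continuous functions $\psi^n(B_t^*)=\exp(-n(B_t^*)^2)\in C_b(\Omega_t)\subset\mathbb{L}_G^1(\Omega_t)$, obtains the uniform bound $\hat{\mathbb{E}}[\psi^n(B_t^*)]\le E_P[\psi^n(|\hat W_{\underline{\sigma}^2 t}|)]$, and lets $n\to\infty$; this stays entirely within the representation of $\hat{\mathbb{E}}$ on $\mathbb{L}_G^1$ as stated in the preliminaries. You instead exploit that $\{B_t^*=0\}$ is closed and use Portmanteau to push the vanishing from $\mathcal{P}_M$ to its weak closure $\mathcal{P}$. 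Both routes are valid; the paper's is slightly more self-contained relative to the preliminaries given, while yours makes the topological mechanism explicit.
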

\begin{proof}
Denote $\psi^n(x)=\exp\{-nx^2\}$ for each $n$. By Remark \ref{5}, we have
\[
\hat{\mathbb{E}}[\psi^n(B^*_t)]=\sup\limits_{\theta}E_P[\psi^n(M^{\theta,*}_t)],
\]
where $M^{\theta}_t=\int_{0}^{t}\theta_udW_u$.
From the time transformation for continuous martingale (see
Theorem V.1.6 in Revuz and Yor \cite{MY}), there exists a Brownian motion $\hat{W}$ on $(\Omega, \mathcal{F}, P)$
(or an enlargement of this probability space) such that
 \[
\hat{W}_{\langle M^{\theta}\rangle_t}=M^{\theta}_t.
\]
Since $\theta^2\in[\underline{\sigma}^2,\overline{\sigma}^2]$, we have
\[
\int^t_0\theta^2_sds\geq \underline{\sigma}^2t.
\]
Thus $\hat{W}^*_{\langle M^{\theta}\rangle_t}\geq \hat{W}^*_{\underline{\sigma}^2t}$ for each $\theta$.
Therefore we obtain
\[
\hat{\mathbb{E}}[\psi^n(B^*_t)]\leq E_P[\psi^n(\hat{W}^*_{\underline{\sigma}^2t})]=E_P[\psi^n(|\hat{W}_{\underline{\sigma}^2t}|)],
\]
since $\hat{W}_t^*\overset{d}{=}|\hat{W}_t|$.
From Lemma \ref{lemma1}, we get
\[
c(B_t^*=0)\leq\lim\limits_{n\rightarrow\infty}\hat{\mathbb{E}}[\psi^n(B^*_t)]=0,
\]
which is the desired result.
\end{proof}

Under  the framework of $G$-expectation,  independence is noncommutative and it requires that the test functions  are  bounded Lipschitz.
In the next lemma we will prove the consistency of  independence notion under $G$-expectation with the classical independence notion in some sense.
\begin{lemma}\label{1}
For each open subset $O$, closed subset $F$ of $\mathbb{R}$ and $t,s\geq 0$,
\begin{align*}
&c(B_t\in O,B_{t+s}-B_t\in O)=c(B_t\in O)c(B_{t+s}-B_t\in O),\\
&c(B_t\in F,B_{t+s}-B_t\in F)=c(B_t\in F)c(B_{t+s}-B_t\in F).
\end{align*}
\end{lemma}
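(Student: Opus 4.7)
The plan is to deduce both identities by approximating $\mathbf{1}_O$ and $\mathbf{1}_F$ by bounded Lipschitz test functions and invoking the sublinear-expectation independence of $B_{t+s}-B_t$ from $B_t$, which in Peng's framework is available only against such Lipschitz tests. Throughout, let $X$ denote either $B_t$ or $B_{t+s}-B_t$; both are continuous on $\Omega$ under the metric $\rho$.

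For the open set $O$, I would choose bounded Lipschitz $\varphi_n : \mathbb{R} \to [0,1]$ with $\varphi_n \uparrow \mathbf{1}_O$, for instance $\varphi_n(x) = (n\,\mathrm{dist}(x, O^c)) \wedge 1$. Under each $P \in \mathcal{P}$, monotone convergence gives $E_P[\varphi_n(X)] \uparrow P(X \in O)$, and swapping the two monotone suprema yields
\[
c(X \in O) \,=\, \lim_n \hat{\mathbb{E}}[\varphi_n(X)],
\]
together with the analogous identity for the joint event $\{B_t \in O,\ B_{t+s}-B_t \in O\}$ via the bounded Lipschitz product $(x,y)\mapsto\varphi_n(x)\varphi_n(y)$. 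Since $\varphi_n \geq 0$, the $G$-independence of $B_{t+s}-B_t$ from $B_t$ applied to this product factors
\[
\hat{\mathbb{E}}[\varphi_n(B_t)\varphi_n(B_{t+s}-B_t)] \,=\, \hat{\mathbb{E}}[\varphi_n(B_t)]\,\hat{\mathbb{E}}[\varphi_n(B_{t+s}-B_t)],
\]
and letting $n \to \infty$ closes the open case.

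For closed $F$, I would approximate from above by bounded Lipschitz $\varphi_n \downarrow \mathbf{1}_F$, e.g.\ $\varphi_n(x) = (1 - n\,\mathrm{dist}(x, F))^+$; the same factorization holds for every $n$. The main obstacle is to identify $\lim_n \hat{\mathbb{E}}[\varphi_n(X)]$ with $c(X \in F)$, since a decreasing limit of sublinear expectations need not coincide with $\sup_P P(X \in F)$ without a compactness argument. The bound $\geq$ is immediate from $\mathbf{1}_F \leq \varphi_n$. For the converse I would pick $P_n \in \mathcal{P}$ with $E_{P_n}[\varphi_n(X)]$ within $1/n$ of $\hat{\mathbb{E}}[\varphi_n(X)]$, extract a weakly convergent subsequence $P_{n_k} \to P^* \in \mathcal{P}$ using the weak compactness of $\mathcal{P}$, and exploit $\varphi_n \leq \varphi_m$ for $n \geq m$ with the continuity of the fixed $\varphi_m$ to obtain
\[
\lim_k E_{P_{n_k}}[\varphi_{n_k}(X)] \,\leq\, \lim_k E_{P_{n_k}}[\varphi_m(X)] \,=\, E_{P^*}[\varphi_m(X)].
\]
Sending $m \to \infty$ and invoking dominated convergence under $P^*$ gives the bound by $E_{P^*}[\mathbf{1}_F(X)] \leq c(X \in F)$. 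Passing to the limit in the factorization then finishes the closed case; the weak compactness of the representing family $\mathcal{P}$ in the $G$-framework is precisely what makes this last step work.
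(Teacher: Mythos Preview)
Your proposal is correct and follows essentially the same route as the paper: approximate $\mathbf{1}_O$ (resp.\ $\mathbf{1}_F$) monotonically by bounded Lipschitz functions, use the nonnegativity to factor $\hat{\mathbb{E}}[\varphi_n(B_t)\varphi_n(B_{t+s}-B_t)]$ via $G$-independence, and pass to the limit. The only cosmetic difference is that for the closed case the paper simply invokes the downward-continuity result for the capacity (Theorem~1.10 of Chapter~VI in Peng), whereas you unpack that step by hand via the weak compactness of $\mathcal{P}$; just note that your compactness argument must also be applied to the joint functional $\varphi_n(B_t)\varphi_n(B_{t+s}-B_t)$ to identify the limit of the left-hand side.
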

\begin{proof}
For each $n$, denote $\psi^n(x)=\frac{nd(x,\mathbb{R}/O)}{1+nd(x,\mathbb{R}/O)}$. Then $\psi^n(x)$ is a Lipschitz function and $\psi^n\uparrow \mathbf{1}_O.$
Then applying Theorem 1.10 of Chapter VI in  Peng \cite{Peng4}, we have
Thus
\[
\mathbb{\hat{E}}[\mathbf{1}_O(B_t)]=\lim\limits_{n\rightarrow\infty}\mathbb{\hat{E}}[\psi^n(B_t)].\]
Consequently,
\begin{align*}
\mathbb{\hat{E}}[\mathbf{1}_O(B_t)\mathbf{1}_O(B_{t+s}-B_t)]=\lim\limits_{n\rightarrow\infty}\mathbb{\hat{E}}[\psi^n(B_t)\psi^n(B_{t+s}-B_t)]
=&\lim\limits_{n\rightarrow\infty}\mathbb{\hat{E}}[\psi^n(B_t)]\lim\limits_{n\rightarrow\infty}\mathbb{\hat{E}}[\psi^n(B_{t+s}-B_t)]\\
=&\mathbb{\hat{E}}[\mathbf{1}_O(B_t)]\mathbb{\hat{E}}[\mathbf{1}_O(B_{t+s}-B_t)],
\end{align*}
and the first equality holds true.
The second equality can be proved in a similar way.
\end{proof}

Now we introduce the following nonlinear PDE:
\begin{align}\label{w3}
\begin{cases}
&
\partial_{t}u^n(t,x)-G(\partial^2_{xx}u^n(t,x))=0, (t,x)\in(0,T)\times \mathbb{R},\\
& u^n(0,x)=\varphi^n(x),
\end{cases}
\end{align}
where function $\varphi^n$ is given in the proof of Lemma \ref{w2}. It is obvious $u^{n+1}\leq u^n$ for each $n$.

By the interior regularity of $u^n$ (see Krylov \cite{K1} and Wang \cite{WL2}), there exists a constant $\gamma\in(0,1)$ depending on $G$ such that for each $\epsilon>0$, we can find some constant $C$ depending only on $\epsilon,\gamma$ and $G$ so that
\[
\|u^n\|_{C^{1+\frac{\gamma}{2},2+\gamma}([\epsilon,T]\times\mathbb{R})}<C.
\]

Then for each $\epsilon$, there exists a subsequence $\{ u^{n'}\}_{n'=1}^{\infty} $ such $\partial_t u^{n'}, \partial_x u^{n'}$ and  $\partial_{xx}u^{n'}$
are Cauchy sequences in $[\epsilon,T]\times\mathbb{R}$. Denoting $v(t,x):=\lim\limits_{n\rightarrow\infty}u^n(t,x)$,
we can get $\partial_t u^{n'}, \partial_x u^{n'}, \partial_{xx}u^{n'}$ converge respectively to $ \partial_t v, \partial_x v, \partial_{xx}v$ in $[\epsilon,T]\times\mathbb{R}$.
Since $\epsilon$ is arbitrary, $v(t,x)\in C^{1,2}((0,T]\times\mathbb{R})$.
Moreover, $v(t,x)$ is a solution of the following nonlinear PDE:
\begin{align*}
\begin{cases}
&
\partial_{t}v(t,x)-G(\partial^2_{xx}v(t,x))=0, (t,x)\in(0,T)\times \mathbb{R},\\
& v(0,x)=\mathbf{1}_{x\leq 0}.
\end{cases}
\end{align*}
In addition,  \[v(t,x)=\lim\limits_{n\rightarrow\infty}u^n(t,x)=\lim\limits_{n\rightarrow\infty}\mathbb{\hat{E}}[\varphi^n(B_t+x)]=\mathbb{\hat{E}}[\mathbf{1}_{B_t+x\leq 0}],\]
which is the nonlinear Feynman-Kac formula.

\begin{lemma}\label{lemma2}
For each $t>0$, $v(t,\cdot):\mathbb{R}\mapsto (0,1)$ is a strictly decreasing  function.
\end{lemma}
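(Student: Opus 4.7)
The claim has three parts: that $v(t,\cdot)$ is non-increasing, takes values in $(0,1)$, and is strictly decreasing. Non-increasing is immediate, since $x_1\leq x_2$ forces $\mathbf{1}_{B_t+x_1\leq 0}\geq \mathbf{1}_{B_t+x_2\leq 0}$ pathwise, and monotonicity of $\hat{\mathbb{E}}$ gives $v(t,x_1)\geq v(t,x_2)$. The lower bound $v(t,x)>0$ follows from Remark \ref{5}: the measure $P_{\underline{\sigma}}\in\mathcal{P}_M$ corresponding to $\theta_s\equiv\underline{\sigma}$ makes $B_t$ centred Gaussian with variance $\underline{\sigma}^2 t$, so $v(t,x)\geq \Phi(-x/(\underline{\sigma}\sqrt{t}))>0$ because $\underline{\sigma}>0$; moreover this bound shows $v(t,x)\to 1$ as $x\to -\infty$. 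Chebyshev applied to each $P\in\mathcal{P}_M$ gives $\sup_{P}P(|B_t|\geq x)\leq \overline{\sigma}^2 t/x^2$, hence $v(t,x)\leq \overline{\sigma}^2 t/x^2\to 0$ as $x\to +\infty$.

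For the strict upper bound $v(t,x)<1$, I would argue by contradiction via the strong maximum principle. Since $v\in C^{1,2}((0,T]\times\mathbb{R})$ solves the $G$-heat equation, which is uniformly parabolic thanks to $\underline{\sigma}^2>0$, the equality $v(t_0,x_0)=1$ at any interior point would force, by the strong maximum principle for uniformly parabolic Bellman-type equations, $v\equiv 1$ on $(0,t_0]\times\mathbb{R}$. This contradicts the decay $v(t_0,x)\to 0$ established above.

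For the strict monotonicity, fix $h>0$ and set $V(t,x):=v(t,x)-v(t,x+h)\geq 0$. Both $v$ and $v(\cdot,\cdot+h)$ are $C^{1,2}$ solutions of the (translation-invariant) $G$-heat equation. Using the piecewise linear form $G(a)=\frac{1}{2}(\overline{\sigma}^2 a^+-\underline{\sigma}^2 a^-)$, a short case check shows $G(a)-G(b)=\alpha(a,b)(a-b)$ for some $\alpha(a,b)\in[\underline{\sigma}^2/2,\overline{\sigma}^2/2]$; hence $V$ is a classical solution of the \emph{linear} uniformly parabolic PDE $\partial_t V=\alpha(t,x)\partial_{xx}V$ with bounded measurable coefficient. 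If $V(t_0,x_0)=0$ at some interior point, the Krylov--Safonov strong minimum principle yields $V\equiv 0$ on $(0,t_0]\times\mathbb{R}$; this would make $v(t_0,\cdot)$ $h$-periodic in $x$, contradicting the limits $v(t_0,-\infty)=1$ and $v(t_0,+\infty)=0$ from the first paragraph. The main obstacle is invoking the right strong maximum principles in each setting: a strong maximum principle for fully nonlinear uniformly parabolic Bellman-type PDEs in the second step, and the Krylov--Safonov Harnack inequality for linear uniformly parabolic operators with merely bounded measurable coefficients in the third.
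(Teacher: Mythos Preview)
Your argument is correct but follows a genuinely different route from the paper. The paper's proof is purely probabilistic and very short: the limits $v(t,-\infty)=1$, $v(t,+\infty)=0$ are quoted from Denis--Hu--Peng, and strict monotonicity is obtained in one line from sublinearity,
\[
v(t,x)-v(t,y)\;\geq\;-\hat{\mathbb{E}}\bigl[-\mathbf{1}_{-y<B_t\leq -x}\bigr]\;>\;0,
\]
the last strict inequality being Li's strict comparison theorem for $G$-expectations (i.e.\ the \emph{lower} capacity of a nondegenerate interval is positive). The range $(0,1)$ then falls out of strict monotonicity together with the limits and the trivial bounds $0\leq v\leq 1$. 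Your approach instead exploits the $C^{1,2}$ regularity of $v$ and argues via PDE maximum principles: a strong maximum principle for the fully nonlinear equation to rule out $v=1$, and a linearisation $G(a)-G(b)=\alpha(a,b)(a-b)$ with $\alpha\in[\underline{\sigma}^2/2,\overline{\sigma}^2/2]$ combined with Krylov--Safonov to rule out $V=v-v(\cdot+h)$ vanishing. This is heavier machinery than the paper needs, and your separate treatment of $v<1$ is in fact redundant once strict monotonicity and the limits are in hand; on the other hand your argument is self-contained on the PDE side and does not import the strict comparison result from \cite{LX}. Both approaches ultimately rest on the same non-degeneracy $\underline{\sigma}^2>0$: the paper uses it through the strict comparison theorem, you use it through uniform parabolicity.
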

\begin{proof}
Applying Theorems 10 and 31 in Denis, Hu and Peng \cite{DHP} yields that
\[
\lim\limits_{x\rightarrow +\infty}v(t,x)=0,\ \lim\limits_{x\rightarrow -\infty}v(t,x)=1.
\]
For each $x<y$, we have
\begin{align*}
v(t,x)-v(t,y)&=\mathbb{\hat{E}}[\mathbf{1}_{B_t+x\leq 0}]-\mathbb{\hat{E}}[\mathbf{1}_{B_t+y\leq 0}]
\geq -\mathbb{\hat{E}}[-\mathbf{1}_{-y<B_t\leq -x}]>0.
\end{align*}
The last estimate comes from Corollary 3.5 of  Li \cite{LX} and we establish the desired result.
\end{proof}

Now we shall study the    local
maxima of the $G$-Brownian motion paths.

\begin{definition}
Given a function $f:[0,\infty)\mapsto\mathbb{R}$. A number $t$ is called a point of local maximum, if there exists a number $\delta>0$ such that
\[
f(s)\leq f(t), \ \ \forall s\in[(t-\delta)^+,t+\delta].
\]
\end{definition}
\begin{theorem}\label{4}
 For q.s. $\omega$, the $G$-Brownian motion $B$ is  monotone in no interval.
\end{theorem}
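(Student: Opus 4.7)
The strategy is to reduce ``$B$ is monotone on some interval'' to a countable union of events of the form ``$B$ is monotone on $[a, b]$'' with $a, b$ rational, and then to show that each such event is polar by refining $[a, b]$ into $n$ equal subintervals and exploiting the $G$-independence of the resulting increments. If $B(\omega)$ is monotone on some open interval $(\alpha, \beta)$, it is monotone on some closed subinterval $[a, b] \subset (\alpha, \beta)$ with rational endpoints, hence
\[
\{\omega : B(\omega) \text{ is monotone on some interval}\} \;\subseteq\; \bigcup_{\substack{a, b \in \mathbb{Q} \\ 0 \le a < b}} A_{a, b}, \qquad A_{a, b} := \{\omega : B(\omega) \text{ is monotone on } [a,b]\}.
\]
By countable subadditivity of $c$, it suffices to prove $c(A_{a, b}) = 0$ for every such pair.

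Fix $0 \le a < b$ rational. For each $n \ge 1$ set $h := (b - a)/n$, $t_i := a + ih$, and let $X_i := B_{t_{i+1}} - B_{t_i}$, $i = 0, \ldots, n-1$. If $B$ is nondecreasing on $[a, b]$ then $X_i \ge 0$ for every $i$; if nonincreasing then $X_i \le 0$ for every $i$. By Lemma \ref{lemma2}, the number $p_0 := v(1, 0) = \hat{\mathbb{E}}[\mathbf{1}_{B_1 \le 0}]$ lies strictly in $(0, 1)$. The scaling $B_h \overset{d}{=} \sqrt{h}\, B_1$ and the reflection invariance $-B \overset{d}{=} B$ (both consequences of the symmetry of the $G$-heat equation under $x \mapsto -x$ and under parabolic scaling) together with stationarity of increments give
\[
c(X_i \ge 0) \;=\; c(X_i \le 0) \;=\; p_0 \qquad \text{for every } i,
\]
independent of $h$. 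If I can establish the $n$-fold bound $c\big(\bigcap_{i=0}^{n-1} \{X_i \ge 0\}\big) \le p_0^n$, then $p_0 < 1$ and $n \to \infty$ force $c(A_{a, b}) = 0$; the nonincreasing case is identical.

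To obtain the $n$-fold bound I would adapt the Lipschitz-approximation trick from the proof of Lemma \ref{1}. Approximate $\mathbf{1}_{[0, \infty)}$ from above by the bounded nonnegative Lipschitz functions $\phi^m(x) := (1 + m\, d(x, [0, \infty)))^{-1} \downarrow \mathbf{1}_{x \ge 0}$. Iterating $G$-independence of the increments with these nonnegative Lipschitz test functions (and using positive homogeneity of $\hat{\mathbb{E}}$ to pull out the constant inner expectations at each step) yields
\[
\hat{\mathbb{E}}\bigg[\prod_{i=0}^{n-1} \phi^m(X_i)\bigg] \;=\; \prod_{i=0}^{n-1} \hat{\mathbb{E}}[\phi^m(X_i)];
\]
since $\mathbf{1}_{X_i \ge 0} \le \phi^m(X_i)$ pointwise, the capacity is bounded above by this product, and letting $m \to \infty$ gives $p_0^n$. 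The main obstacle is precisely this iterated factorization: Lemma \ref{1} is stated only for two increments, so one must verify the $n$-fold $G$-independence identity for nonnegative bounded Lipschitz test functions, and then justify the downward passage $\phi^m \downarrow \mathbf{1}_{[0, \infty)}$ in each factor by dominated convergence in $\mathbb{L}_G^1$, using that $\mathbf{1}_{X_i \ge 0} \in \mathbb{L}_G^1$ by a verbatim adaptation of Lemma \ref{w2} to the increment $X_i$ (which has the law of $B_h$ and satisfies $c(X_i = 0) = 0$).
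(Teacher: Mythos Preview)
Your proposal is correct and follows essentially the same route as the paper: reduce to a countable union over rational intervals, partition each into $n$ equal pieces, factorize the capacity of the event that all increments have one sign as $\rho^{\,n}$ via $G$-independence of the increments (Lemma~\ref{1}, iterated to $n$ factors exactly as you outline using nonnegativity and positive homogeneity), and conclude from $\rho<1$ via Lemma~\ref{lemma2}. The only cosmetic differences are that the paper invokes the symmetry $-B\overset{d}{=}B$ to treat just the nonincreasing case and normalizes the interval to $[0,1]$ at the outset.
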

\begin{proof}
Denote by $F$ the set of all paths $\omega$ with the property that $B_t(\omega)$  as a function of $t$ is  monotone in some time interval. Then we have
\[
F=\underset{s,t\in Q}{\cup}\{\omega: B\ \text{is  monotone in}\  [s,t]\},
\]
where $Q$ is the set of all rational points in $[0,\infty)$. Therefore it  suffices to show that in any such
interval, say in $[0,1]$,   the path $B$ is q.s. not monotone.
By virtue  of the symmetry of $G$-Brownian motion, it  suffices to prove the set
\[
A=\{\omega: B\ \text{is  decreasing on}\  [0,1]\}
\]
is a polar set. But $A=\cap_{n}A_n$, where
\[
A_n=\cap^{n}_{i=1}\{\omega: B_{\frac{i}{n}}-B_{\frac{i-1}{n}}\leq 0\}.
\]
From the Lemmas \ref{1} and \ref{lemma2}, we conclude that
\[
c(A_n)=\Pi^{n}_{i}c(\{\omega: B_{\frac{i}{n}}-B_{\frac{i-1}{n}}\leq 0\})=\rho^n,
\]
where $\rho=\mathbb{\hat{E}}[\mathbf{1}_{B_t\leq 0}]=\mathbb{\hat{E}}[\mathbf{1}_{B_s\leq 0}]<1$ for each $s,t$.
Thus $c(A)\leq \lim\limits_{n\rightarrow\infty}c(A_n)=0,$ which completes the proof.
\end{proof}

Note that if $f$ is a continuous function which is monotone in no interval, then set of points of local maximum for $f$  is dense (see Chapter 2.9 in \cite{KS}).
Thus the following corollary is a direct result of theorem \ref{4}.
\begin{corollary}
The set of points of local maximum for the $G$-Brownian motion $B$ is  dense in $[0,\infty)$ q.s..
\end{corollary}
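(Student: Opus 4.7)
The plan is to invoke Theorem \ref{4} together with a purely deterministic real-analytic fact about continuous functions on the line. Since the authors have already reduced to that point, almost all of the work is done; the corollary should follow by quoting the theorem and citing the classical observation about local maxima of non-monotone continuous functions.

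More precisely, I would proceed as follows. First, recall that the $G$-Brownian motion paths $t\mapsto B_t(\omega)$ are continuous on $[0,\infty)$ for q.s.\ $\omega$ (this is built into the construction of $B$ on $\Omega=C_0(\mathbb{R}^+)$). Second, by Theorem \ref{4}, outside a polar set $N$ the path $B_\cdot(\omega)$ is monotone on no subinterval of $[0,\infty)$. Third, I would apply the deterministic lemma stated right before the corollary (Chapter 2.9 of \cite{KS}): if $f:[0,\infty)\to\mathbb{R}$ is continuous and monotone on no interval, then the set of local maxima of $f$ is dense in $[0,\infty)$. Applying this pathwise to $f=B_\cdot(\omega)$ for every $\omega\notin N$ yields the claim.

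If one wanted a self-contained argument for the real-analytic step rather than merely citing it, the key idea is the following. Fix an open interval $I\subset[0,\infty)$ and choose $[a,b]\subset I$. By non-monotonicity on $[a,b]$ one finds points $a\leq u<v<w\leq b$ with $f(u)<f(v)>f(w)$: if this ``$\wedge$-configuration'' failed for every triple then $f$ would be monotone on $[a,b]$. The continuous function $f$ attains its maximum on the compact set $[u,w]$, and because $f(v)$ strictly exceeds both endpoint values, this maximum is attained at some interior point $t^*\in(u,w)\subset I$, which is therefore a point of local maximum of $f$ lying in $I$. Since $I$ was arbitrary, the set of local maxima is dense.

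The main obstacle is really not an obstacle at all—the substance lies in Theorem \ref{4}, whose proof in turn relies on Lemmas \ref{lemma1}--\ref{lemma2} and the independence fact Lemma \ref{1}. All that remains for the corollary is to make sure the exceptional polar set from Theorem \ref{4} can be combined with the q.s.\ continuity of paths (it can, since a finite union of polar sets is polar), and then to apply the deterministic statement pathwise. I would therefore keep the write-up short, essentially one sentence citing Theorem \ref{4} and \cite[Ch.~2.9]{KS}.
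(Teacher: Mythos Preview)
Your proposal is correct and matches the paper's approach exactly: the paper also derives the corollary by combining Theorem~\ref{4} with the deterministic fact from \cite[Ch.~2.9]{KS} that a continuous function which is monotone on no interval has a dense set of local maxima. Your optional self-contained argument for the real-analytic step is a nice addition but is not needed, since the paper is content to cite \cite{KS}.
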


Then we shall show the nowhere differentiability of $G$-Brownian paths.
\begin{lemma}\label{l}
For each $t>0$, $a\in\mathbb{R}$ and $\epsilon>0$, we have
\[
c(|B_t-a|\leq \epsilon)\leq \exp(\frac{1}{2})\frac{\epsilon^{2\alpha}}{t^{\alpha}},
\]
where $\alpha=\frac{\underline{\sigma}^2}{2}$.
\end{lemma}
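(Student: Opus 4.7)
The plan is to control $c(|B_t - a|\le\epsilon)$ by a Gaussian-type quantity that can be handled with the PDE estimate from Lemma \ref{lemma1}, and then optimize in a free parameter.

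First I would bound the indicator by a Gaussian: for every $n\ge 0$ and every $x\in\mathbb{R}$,
\[
\mathbf{1}_{|x-a|\le\epsilon}\;\le\;\exp\!\Big(\tfrac{n\epsilon^{2}}{2}\Big)\exp\!\Big(-\tfrac{n(x-a)^{2}}{2}\Big),
\]
since on the set $\{|x-a|\le\epsilon\}$ the exponential $\exp(-n(x-a)^2/2)$ is at least $\exp(-n\epsilon^2/2)$. Taking sublinear expectation and using the representation $c(A)=\sup_{P\in\mathcal{P}}P(A)\le\hat{\mathbb{E}}[\phi]$ whenever $\mathbf{1}_{A}\le\phi$ with $\phi\in C_{b}(\Omega)$, this gives
\[
c(|B_{t}-a|\le\epsilon)\;\le\;\exp\!\Big(\tfrac{n\epsilon^{2}}{2}\Big)\,\hat{\mathbb{E}}\!\left[\exp\!\Big(-\tfrac{n(B_{t}-a)^{2}}{2}\Big)\right].
\]

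Next I would identify the $G$-expectation on the right with the solution of the $G$-heat equation in Lemma \ref{lemma1}. By the nonlinear Feynman--Kac formula (as used in the proof of Lemma \ref{w2}), the function $u^{n}$ appearing in Lemma \ref{lemma1} satisfies $u^{n}(t,0)=\hat{\mathbb{E}}[\exp(-n(B_{t}-a)^{2}/2)]$. Then Lemma \ref{lemma1} at $x=0$ yields
\[
\hat{\mathbb{E}}\!\left[\exp\!\Big(-\tfrac{n(B_{t}-a)^{2}}{2}\Big)\right]\;\le\;\frac{1}{(nt+1)^{\alpha}}\exp\!\Big(-\tfrac{na^{2}}{2(nt+1)}\Big)\;\le\;\frac{1}{(nt)^{\alpha}},
\]
valid for $n,t>0$.

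Combining these two steps gives, for every $n>0$,
\[
c(|B_{t}-a|\le\epsilon)\;\le\;\exp\!\Big(\tfrac{n\epsilon^{2}}{2}\Big)\frac{1}{(nt)^{\alpha}}.
\]
The final step is to choose $n$ optimally. A quick sanity check suggests $n=1/\epsilon^{2}$; plugging this in produces $\exp(1/2)\cdot \epsilon^{2\alpha}/t^{\alpha}$, which is exactly the stated bound. There is no genuine obstacle here — the proof is essentially a Chebyshev-type exponential moment bound fed by the PDE estimate of Lemma \ref{lemma1} — the only care needed is in step 2, where one must be sure that the bound passes from indicators to the sublinear expectation, which is immediate from the representation $c=\sup_{P\in\mathcal{P}}P$ and the fact that the Gaussian majorant lies in $C_{b}(\Omega)\subset\mathbb{L}^{1}_{G}(\Omega_{t})$.
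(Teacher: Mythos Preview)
Your proof is correct and follows the same argument as the paper: bound the indicator by $\exp(n\epsilon^{2}/2)\exp(-n(B_{t}-a)^{2}/2)$, apply the Feynman--Kac identification with $u^{n}(t,0)$ and the estimate of Lemma~\ref{lemma1}, and then set $n=1/\epsilon^{2}$. The only cosmetic difference is that the paper bounds $(1+nt)^{-\alpha}$ after substituting $n=1/\epsilon^{2}$ rather than first replacing it by $(nt)^{-\alpha}$, which is immaterial.
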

\begin{proof}
By Lemma \ref{lemma1}, we obtain for each $a\in\mathbb{R}$ and $n\geq 0$,
\[
\mathbb{\hat{E}}[\exp(-\frac{n(B_t-a)^{2}}{2})]\leq \frac{1}{(1+nt)^{\alpha}}.
\]
Thus for each $n$ and $\epsilon>0$, we obtain that
\[
\hat{\mathbb{E}}[\mathbf{1}_{|B_t-a|\leq \epsilon}]
\leq \exp(\frac{n\epsilon^2}{2})\mathbb{\hat{E}}[\exp(-\frac{n(B_t-a)^{2}}{2})]\leq \exp(\frac{n\epsilon^2}{2})\frac{1}{(1+nt)^{\alpha}}.
\]
Therefore, taking $n=\frac{1}{\epsilon^2}$ yields that,
\[
c(|B_t-a|\leq \epsilon)\leq \exp(\frac{1}{2})\frac{\epsilon^{2\alpha}}{t^{\alpha}},
\]
which completes the proof.
\end{proof}
\begin{theorem}
The $G$-Brownian path is q.s. nowhere locally H\"{o}lder continuous of order $\gamma$ for any $\gamma>\frac{1}{2}.$ In particular, the $G$-Brownian path is nowhere differentiable  and has infinite variation on any interval.
\end{theorem}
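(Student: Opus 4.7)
The plan is to adapt the classical Paley--Wiener--Zygmund argument to the $G$-framework, using Lemma \ref{lemma1} as the quantitative input. Fix $\gamma>\frac{1}{2}$ and, for integers $M,T\geq 1$, let $H_{M,T}$ denote the set of $\omega$ for which there exist $t_0\in[0,T]$ and $\delta>0$ with $|B_s(\omega)-B_{t_0}(\omega)|\leq M|s-t_0|^\gamma$ whenever $|s-t_0|\leq\delta$. The event of being locally H\"older of order $\gamma$ at some point is $\bigcup_{M,T}H_{M,T}$, so by countable subadditivity of $c$ it suffices to show $c(H_{M,T})=0$ for every $M,T$.

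Choose $j\in\mathbb{N}$ with $j\alpha(2\gamma-1)>1$, which is possible since $\gamma>\frac{1}{2}$ and $\alpha=\underline{\sigma}^2/2>0$. If $\omega\in H_{M,T}$ with parameters $(t_0,\delta)$, then for every $n>j/\delta$ and $k=\lfloor nt_0\rfloor$ two applications of the triangle inequality yield
\[
|B_{(k+i)/n}-B_{(k+i-1)/n}|\leq 2M(j/n)^\gamma=:\epsilon_n,\quad i=1,\ldots,j.
\]
Writing $A_{n,k}$ for the intersection of the $j$ events above, one has
\[
H_{M,T}\subset\bigcup_{n_0\geq 1}\bigcap_{n\geq n_0}\bigcup_{k=0}^{\lfloor nT\rfloor}A_{n,k}.
\]

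The crux is to estimate $c(A_{n,k})$. For any $\lambda>0$ the pointwise Chebyshev bound gives
\[
\prod_{i=1}^{j}\mathbf{1}_{\{|B_{(k+i)/n}-B_{(k+i-1)/n}|\leq\epsilon_n\}}\leq e^{j\lambda\epsilon_n^2/2}\prod_{i=1}^{j}\exp\!\Big(-\frac{\lambda}{2}(B_{(k+i)/n}-B_{(k+i-1)/n})^{2}\Big).
\]
Since $x\mapsto e^{-\lambda x^2/2}$ is a nonnegative bounded Lipschitz function and the increments of $G$-Brownian motion are $G$-independent and stationary, iterating Peng's definition of $G$-independence (in the spirit of the proof of Lemma \ref{1}) yields
\[
\hat{\mathbb{E}}\bigg[\prod_{i=1}^{j}e^{-\lambda(B_{(k+i)/n}-B_{(k+i-1)/n})^{2}/2}\bigg]=\Big(\hat{\mathbb{E}}\big[e^{-\lambda B_{1/n}^{2}/2}\big]\Big)^{j}\leq\big(1+\lambda/n\big)^{-j\alpha},
\]
where the last inequality is Lemma \ref{lemma1} applied with $a=0$. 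Choosing $\lambda=1/\epsilon_n^2$ and using that $n\epsilon_n^2\to 0$ for $\gamma>\frac{1}{2}$ gives $c(A_{n,k})\leq C\,n^{j\alpha(1-2\gamma)}$ for a constant $C=C(M,j,\gamma)$. Subadditivity over $k$ then produces
\[
c\Big(\bigcup_{k=0}^{\lfloor nT\rfloor}A_{n,k}\Big)\leq(nT+1)\,C\,n^{j\alpha(1-2\gamma)}=O\big(n^{1+j\alpha(1-2\gamma)}\big)\longrightarrow 0
\]
by the choice of $j$; since $c\big(\bigcap_{n\geq n_0}\bigcup_kA_{n,k}\big)$ is dominated by $c\big(\bigcup_kA_{n,k}\big)$ for every $n\geq n_0$, it vanishes for each $n_0$, and countable subadditivity forces $c(H_{M,T})=0$.

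For the two consequences: differentiability of $B$ at some $t_0$ would imply local H\"older continuity of order $1>\frac{1}{2}$ there, contradicting what has just been proved; and if $B$ had finite variation on some interval, Lebesgue's theorem for monotone functions would produce a point of differentiability in that interval, again a contradiction. A countable union over intervals with rational endpoints delivers the infinite-variation statement quasi-surely. I expect the main technical obstacle to be the justification of the product formula for $\hat{\mathbb{E}}$ on $j$ successive bounded Lipschitz increments; this requires iterating the one-step $G$-independence used in Lemma \ref{1} via the tower property $\hat{\mathbb{E}}[\,\cdot\,]=\hat{\mathbb{E}}[\hat{\mathbb{E}}_{(k+i-1)/n}[\,\cdot\,]]$, absorbing one nonnegative factor at a time.
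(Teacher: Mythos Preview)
Your proposal is correct and follows essentially the same Paley--Wiener--Zygmund route as the paper, with the same integer threshold $j>1/(\alpha(2\gamma-1))$ (the paper's $l$) and the same combination of Lemma~\ref{lemma1} with $G$-independence of increments. The only cosmetic difference is that the paper first isolates the single-increment bound $c(|B_t-a|\leq\epsilon)\leq e^{1/2}\epsilon^{2\alpha}/t^{\alpha}$ as a separate lemma (Lemma~\ref{l}) and then factors the capacity of the $l$-fold intersection via Lemma~\ref{1}, whereas you dominate the product of indicators pointwise by a product of bounded Lipschitz Gaussian exponentials and factor that expectation directly through $G$-independence before invoking Lemma~\ref{lemma1}; the arithmetic is identical.
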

\begin{proof}
We shall only have to prove
that the $G$-Brownian path is q.s. nowhere locally H\"{o}lder continuous of order $\gamma$ on interval $[0,1]$ as the classical case.
Fix a  $\gamma>\frac{1}{2}$,  we can find an integer $l$ so that $l>\frac{1}{\alpha(2\gamma-1)}.$
For each $\beta>0$, we consider
$$ A_n=\{\omega: \ \text{there is a point $s\in[0,1]$, such that $|B_t(\omega)-B_s(\omega)|<\beta| t-s|^\gamma$ whenever  $|t-s|<\frac{l}{n}$}\}.$$
It is obvious that $A_n$ is increasing to $\cup_{n=1}^{\infty}A_n=\lim\limits_{n\rightarrow\infty}A_n.$
Note that any locally $\gamma$-H\"{o}lder continuous path $(\omega_s)_{s\in[0,1]}$   belongs to some $A_n$ with some $\beta>0$.
Then it suffice to prove $ c(\cup_{n=1}^{\infty}A_n)$ is polar for each $\beta>0$.

For each $0\leq k\leq n$, we set
$$ Z_k:=\max\limits_{1\leq i\leq l} | B_{\frac{k+i}{n}}-B_{\frac{k+i-1}{n}}| \ \ \text{and}\ \
D_n:=\{\omega: \ \text{ there is a constant $k$ such that}\ \ Z_k(\omega)\leq 2\beta\frac{l^\gamma}{n^\gamma}\}.$$
It is easy to check that $A_n\subset D_n$. Indeed, for each $\omega \in A_n$ that is locally  H\"{o}lder continuous  at $s$, if $k$ is chosen to be the largest integer  such that $k\leq ns,$ then
$$Z_k(\omega)\leq 2\beta\frac{l^\gamma}{n^\gamma}.$$
Thus  from Theorem 1.10 of Chapter VI in  Peng \cite{Peng4}, we get
$$ c(\cup_{n=1}^{\infty}A_n)=\lim\limits_{n\rightarrow\infty} c(A_n)\leq\liminf\limits_{n\rightarrow\infty} c(D_n)\leq \liminf\limits_{n\rightarrow\infty} \sum_{k=0}^{n}c(Z_k(\omega)\leq 2\beta\frac{l^\gamma}{n^\gamma}), $$
 In the spirit of  Lemma \ref{1}, we obtain,
\[\sum_{k=0}^{n}c(Z_k(\omega)\leq 2\beta\frac{l^\gamma}{n^\gamma})=n  c(Z_0(\omega)\leq 2\beta\frac{l^\gamma}{n^\gamma})\leq n  c(\vert B_{\frac{1}{n}}\vert\leq 2\beta\frac{l^\gamma}{n^\gamma})^l\leq n c(\vert B_{n^{2\gamma-1}}\vert\leq 2\beta {l^\gamma}) ^l. \]
Then by Lemma \ref{l}, we get that
$$
c(\cup_{n=1}^{\infty}A_n)\leq  \liminf\limits_{n\rightarrow\infty}n c(\vert B_{n^{2\gamma-1}}\vert\leq 2\beta {l^\gamma})^l\leq \liminf\limits_{n\rightarrow\infty}n[\exp(\frac{1}{2})\frac{{(2\beta l^\gamma)}^{2\alpha}}{{n^{(2\gamma-1)\alpha}}}]^l=0,
$$ since $l>\frac{1}{\alpha(2\gamma-1)}$. The proof is complete.
\end{proof}

\end{document}